\colorlet{shadecolor}{blue!15}
\newtheorem{theorem}{Theorem}[section]
\newtheorem{corollary}[theorem]{Corollary}
\newtheorem{lemma}[theorem]{Lemma}
\newtheorem{proposition}[theorem]{Proposition}
\newtheorem{remark}[theorem]{Remark}
\newcommand{\be}[1]{\begin{equation}\label{#1}}
\newcommand{\ee}{\end{equation}}
\newcommand{\ba}[1]{\begin{align}\label{#1}}
\newcommand{\ea}{\end{align}}
\newcommand{\ben}{\begin{equation*}}
\newcommand{\een}{\end{equation*}}
\newenvironment{proof}[1][\relax]
  {\paragraph{Proof\ifx#1\relax\else~of #1\fi}}%
  {~\hfill$\square$\par\bigskip}
\newcommand{\calM}{\mathcal{M}}
\newcommand{\calZ}{\mathcal{Z}}
\newcommand{\frX}{\mathfrak{X}}
\newcommand{\bbC}{\mathbb{C}}
\newcommand{\bbH}{\mathbb{H}}
\newcommand{\bbP}{\mathbb{P}}
\newcommand{\bbR}{\mathbb{R}}
\newcommand{\bbZ}{\mathbb{Z}}
\newcommand{\sfP}{{\sf P}}
\newcommand{\bsx}{\boldsymbol{x}}
\renewcommand{\Im}{\textrm{Im}}
\newcommand{\diff}{\mathrm{d}}
\newcommand{\SLE}{\mathsf{SLE}}
\title{A new computation of pairing probabilities in several multiple-curve models}
\author{
\vspace{-0.1cm}
Alex M. Karrila\thanks{
\AA bo Akademi University, Finland.
E-mail: \texttt{alex.karrila@abo.fi; alex.karrila@gmail.com}
}
}
\date{\vspace{-0.5cm}}
\begin{document}

\maketitle

\begin{abstract}
We give a new, short computation of pairing probabilities for multiple chordal interfaces in the critical Ising model, the harmonic explorer, and for multiple level lines of the Gaussian free field. The core of the argument are the known convexity property and a new uniqueness property of local multiple $\mathsf{SLE}(\kappa)$ measures, valid for all $\kappa > 0$. In particular, the proof is directly is applicable for any underlying random curve model, once it is identified as a local multiple $\mathsf{SLE}(\kappa)$ both conditionally and unconditionally on the pairing topology.\\
\textbf{Keywords:} Schramm--Loewner evolution; SLE; multiple SLE; Ising model; Gaussian free field; harmonic explorer. \\
\textbf{AMS 2020 subject classification:} Primary: 60J67; Secondary: 60H30; 82B20; 60K35; 82B27.
\end{abstract}

\section{Introduction}

\begin{figure}
\begin{center}
\includegraphics[height=3.3cm]{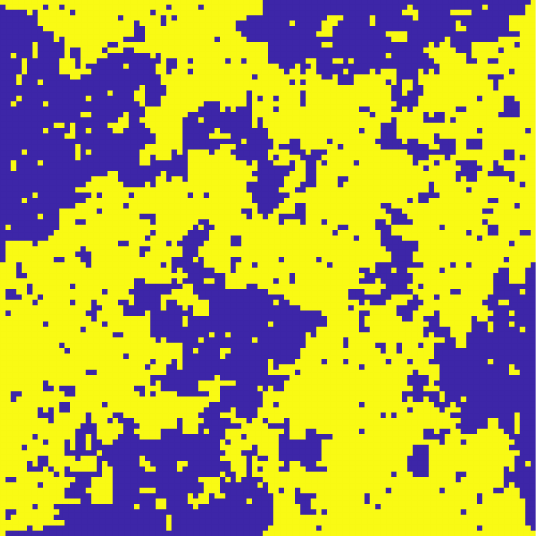}%
\hspace{0.65cm}
\includegraphics[height=3.3cm]{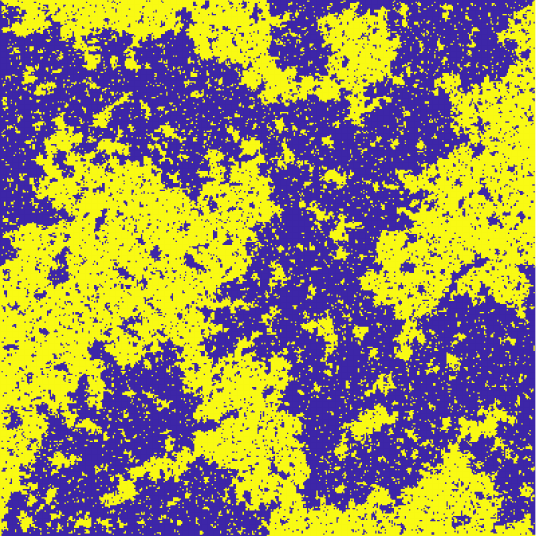}
\hspace{0.65cm}
\includegraphics[height=2.8cm]{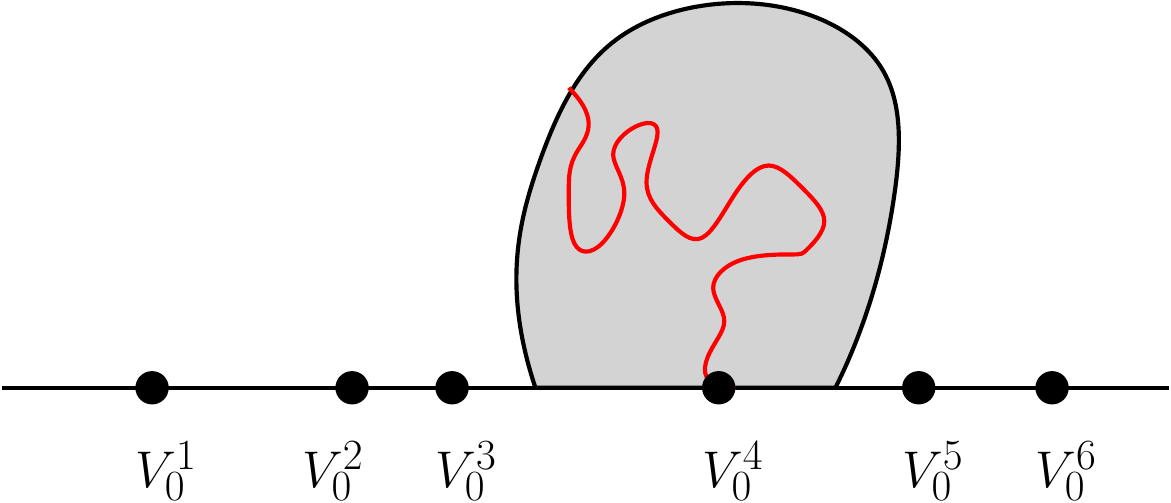}
\end{center}%
\caption{
\label{fig:Ising} 
\label{fig:NSLE schematic}
\textbf{Left, middle:} Simulations of the critical Ising model with alternating boundary conditions on $100 \times 100$ (left) and $400 \times 400$ (middle) square grid graphs. The model with $2N$ boundary condition alternation points naturally gives rise to $N$ chordal interfaces (here $N=4$), pairing up the alternation points in some random manner. Labelling the alternation points in these two examples by the points of compass in the natural manner, the \textit{pairings} of the alternation points by the interfaces are here $\{ \{ \mathrm{sw}, \mathrm{s} \}, \{ \mathrm{se}, \mathrm{e} \}, \{ \mathrm{ne}, \mathrm{n} \}, \{ \mathrm{nw}, \mathrm{w} \} \}$ (left) and $\{ \{ \mathrm{sw}, \mathrm{s} \}, \{ \mathrm{se}, \mathrm{w} \}, \{ \mathrm{e}, \mathrm{ne} \}, \{ \mathrm{n}, \mathrm{nw} \} \}$ (middle). \\
\noindent{\textbf{Right:}}
A schematic illustration of the setup where local multiple SLEs are studied in this note: $2N$ fixed boundary points $V_0^1 < \ldots < V^{2N}_0$, a fixed index $1 \leq j \leq 2N$, and a fixed neighbourhood of $V^j_0$ in $\bbH$ not containing any other marked boundary points on its boundary. The Loewner growth process~\eqref{eq:Loewner ODE}--\eqref{eq:NSLE SDE} starting from $V^j_0$ is then considered up to the stopping time when $K_t$ first hits the boundary (in $\bbH$) of this localization neighbourhood.
}
\end{figure}

Schramm--Loewner evolution (SLE) type curves are conformally invariant random curves \cite{Schramm-LERW_and_UST, RS-basic_properties_of_SLE}
that are known or conjectured to describe (scaling limits of) random interfaces in many critical planar models~(e.g.~\cite{Smirnov-critical_percolation, LSW-LERW_and_UST, SS05,
SS09, 
CDHKS-convergence_of_Ising_interfaces_to_SLE}).
The most common SLE variants are defined in the upper half-plane $\bbH$ via the Loewner equation
\begin{align}
\label{eq:Loewner ODE}
\partial_t g_t (z) &= \frac{2}{g_t(z)-W_t},
\end{align}
whose solution for a given starting point $g_0 (z) = z $ in $ \bbH$ \footnote{We will allow starting points in $\overline{\bbH}$ without explicit mention whenever it is more beneficial.} is only defined up to the (possibly infinite) explosion time when $g_t(z)$ and $W_t$ collide. The sets $K_t$ where the solution is not defined up to time $t$ are those carved out by the initial segment of the SLE curve, while $g_t $ is a conformal map $\bbH \setminus K_t \to \bbH$. For instance, the chordal $\mathsf{SLE}(\kappa)$ from $0$ to $\infty$ in $\bbH$ is obtained by taking $W_t = \sqrt{\kappa} \beta_t$, where $\beta_t$ is a standard Brownian motion and $\kappa > 0$ is the parameter of the model; see~\cite{Lawler-SLE_book} for an introduction.

The most central model in this note is the local multiple $\mathsf{SLE}(\kappa)$~\cite{BBK-multiple_SLEs, Dubedat-commutation, KP-pure_partition_functions_of_multiple_SLEs}
which, e.g., with $\kappa = 3$ describes the scaling-limit interfaces in the Ising model with alternating boundary conditions (see Figure~\ref{fig:Ising}), when mapped conformally to $\bbH$. The definition takes as an input a \textit{partition function}, which is defined as a map $\calZ: \frX \to \bbR_{>0}$, where $\frX = \{  \bsx = (x_1, \ldots, x_{2N} ) \in \bbR^{2N} \; : \; x_1 < \ldots < x_{2N} \} $, satisfying to the conformal covariance condition (denoting $\frac{6 -\kappa}{2 \kappa} = h$)
\begin{align}
\calZ(\bsx ) = \big( \prod_{i=1}^{2N} \phi'(x_i)^h \big) \calZ (\phi (x_1), \ldots, \phi (x_{2N})) 
\end{align}
for all conformal (Möbius) maps $\phi$ from $\bbH$ to $\bbH$ with $\phi(x_1) < \ldots < \phi(x_{2N})$,
and the partial differential equations (PDEs)
\begin{align}
\label{eq:Z-PDEs}
\bigg[ \frac{\kappa}{2 } \partial_{\jmath}^2 + \sum_{\substack{i=1 \\ i \neq \jmath }}^{2N} \Big( \frac{2}{x_i - x_\jmath} \partial_i- \frac{2 h}{(x_i - x_\jmath)^2}\Big) \bigg] \mathcal{Z} ( \bsx ) = 0, \qquad \text{for all } 1 \leq \jmath \leq 2N \text{ and } \bsx \in \frX.
\end{align}
Then, in $\bbH$, the \textit{local multiple SLE} with the $2N$ marked boundary points\footnote{
E.g., for the Ising model, these are the limiting curve endpoints.
Throughout this note, $(V_0^1, \ldots, V^{2N}_0 ) \in \frX$ are regarded as \textit{fixed} by a given geometric setup, while $\bsx \in \frX$ denote ``dummy variables''. 
} 
$V_0^1 < \ldots < V^{2N}_0$, growing from $V^j_0$ is described by the Loewner equation~\eqref{eq:Loewner ODE} where $W_t$ is determined by $W_0 = V^j_0$ and $2N$ coupled SDEs: $V^i_t = g_t (V^i_0)$ for $i \neq j$ and
\begin{align}
\label{eq:NSLE SDE}
\diff W_t = \sqrt{\kappa} \diff \beta_t + \kappa \tfrac{ \partial_j \calZ  }{\calZ  } (V_t^1, \ldots, V_t^{j-1}, W_t, V_t^{j+1}, \ldots, V^{2N}_t) \diff t.
\end{align}


To avoid treating the behaviour of the PDE solutions $\calZ$ at $\partial \frX$, the process~\eqref{eq:NSLE SDE} is usually only studied up to the stopping time when the growing sets $K_t$ hit the boundary of a given localization neighbourhood of $V^j_0$; see Figure~\ref{fig:NSLE schematic} (right). Fixing the geometric setup above,
the laws of two multiple SLE (stopped) driving functions $W_t$ coincide if and only if the partition functions are constant multiples of each other (Lemma~\ref{lem:dr fcns <-> measures}). Note also the set of partition functions is a convex cone; a corresponding convex-space property readily follows for the local multiple SLE measures (Equations~\eqref{eq:tilde-Z}--\eqref{eq:tilde-P}).
The emergence of a convex space, rather than a unique measure, is intuitively explained as non-crossing interfaces in an underlying lattice model can pair up the $2N$ marked boundary points into a Catalan number $C_N = \frac{1}{N+1}\binom{2N}{N}$ of different planar pair partitions, or \textit{pairings}, for short (Figure~\ref{fig:Ising}). Each pairing-conditional measure (as well as their convex combinations) should then converge to a local multiple SLE.

The main contribution of this note is a new, short computation solving the probabilities of the different pairings in several underlying random curve models. The core lemma is the following (we combine the known convexity and the apparently new uniqueness in a single statement).
\begin{lemma}[Convexity and uniqueness properties of multiple SLEs]
\label{lem:ptt fcn conv space ppty} 
Fix launching points $V_0^1 < \ldots < V^{2N}_0$, an index $1 \leq j \leq 2N$, and a localization neighbourhood of $V^j_0$.
In this geometry, let $\sfP$ be a convex combination of finitely many laws $(\sfP_\alpha)_\alpha$ of some multiple $\SLE(\kappa)$ driving functions with respective partition functions $( \calZ_\alpha)_\alpha$:
\begin{align*}
\sfP = \sum_\alpha p_\alpha \sfP_\alpha, \qquad\text{where } p_\alpha \geq 0 \text{ and } \sum_\alpha p_\alpha = 1.
\end{align*}
Then, also $\sfP$ is a local multiple $\SLE(\kappa)$ driving function, and the corresponding partition function $\calZ$, unique up to a multiplicative constant $C > 0$, is given by\footnote{The multiplicative constant clearly satisfies $C = \calZ (V_0^1, \ldots, V^{2N}_0)$. In particular, normalizing the $\calZ_\alpha$:s to attain the value $1$ at $(V_0^1, \ldots, V^{2N}_0) $ and setting analogously $C=1$, we obtain $\calZ (\bsx) = \sum_\alpha p_\alpha \calZ_\alpha (\bsx)$.}
\begin{align}
\label{eq:conv comb ptt fcns}
\calZ (\bsx) = C \sum_\alpha \frac{p_\alpha}{\calZ_\alpha (V_0^1, \ldots, V^{2N}_0) } \calZ_\alpha ( \bsx ) \qquad \text{for all } \boldsymbol{x} \in \frX.
\end{align}
\end{lemma}

Consequently, if some random chordal (scaling limit) curves $\gamma_1, \ldots, \gamma_N$ in $\bbH$ have known descriptions as local multiple SLEs both as such ($\sfP$ and $\calZ$) and conditionally on any of the $C_N$ pairings (which correspond to the indices $\alpha$; $\sfP_\alpha$ and $\calZ_\alpha$), then the pairing probabilities $p_\alpha$ can be solved from the linear coefficients of the $\calZ_\alpha$:s in $\calZ$ (provided that the $\calZ_\alpha$:s are linearly independent). This argument is not specific for any $\kappa$ or any underlying lattice model, but the model's connection to SLEs naturally needs to be established; we explicate the necessary inputs in Section~\ref{subsec:Ising pairing probas}. We then apply this strategy to compute the pairing probabilities in the Ising model, the multiple harmonic explorer, and the level lines of the Gaussian free field. (In all three examples, the identification of the $\alpha$-conditional law $\sfP_\alpha$ is deduced from the theory of the so-called global multiple SLEs~\cite{PW, BPW}, an argument specific for $\kappa \leq 4$.) Whenever solvable by this method, the pairing probabilities $p_\alpha$ become ratios of multiple SLE partition functions. This has interesting interpretations in terms of Conformal field theory (see, e.g.,~\cite{Peltola-towards_a_CFT_for_SLEs}) which we however suppress in this short note. 

The pairing probabilities in the Ising and free field models were earlier solved in~\cite{PW18} and~\cite{PW}, respectively.
Those proofs rely on conditional probability martingales which, unlike the convex combination argument here, have direct and well-known analogues for single SLEs. Analyzing the behaviour of these martingales up to the termination time of the curves (in order to relate their termination value to the correct pairing event) however required in~\cite{PW, PW18} a fine analysis of both the SLE process and the martingale, which were based on technical arguments specific for the corresponding parameter values $\kappa=3$ and $\kappa=4$, respectively. We thus hope that the alternative, short and general proof in this note clarifies the picture. The present proof, especially Lemmas~\ref{lem:dr fcns <-> measures} and~\ref{lem:ex of density}, may also be interesting in the context of other SLE variants with partition functions (see, e.g.,~\cite{BBK-multiple_SLEs, mie2}).\footnote{
Between the appearance of the first preprint and the final version of the present note, another convex independence result for SLE type curves was sketched based on these lemmas in~\cite[Appendix~D.1]{KLPR}.
}
 Other solutions of pairing probabilities of multiple SLE type curves can be found at least in~\cite{Smirnov-critical_percolation, Dubedat-Euler_integrals, KW-boundary_partitions_in_trees_and_dimers, KKP, FPW22}. We conclude by noticing that finding the dimension of the convex cone of multiple SLE partition functions has attracted some attention~(e.g.,~\cite{FK-solution_space_for_a_system_of_null_state_PDEs_1, FK-solution_space_for_a_system_of_null_state_PDEs_2, FK-solution_space_for_a_system_of_null_state_PDEs_3, FK-solution_space_for_a_system_of_null_state_PDEs_4}); Lemma~\ref{lem:ptt fcn conv space ppty} directly connects it to the dimension of the convex space of multiple SLE measures, the natural probabilistic analogue.


\paragraph{Acknowledgements} The author wishes to thank Eveliina Peltola, Lukas Schoug, Lauri Viitasaari, and Hao Wu for discussions and the anonymous referees for valuable insights. Special thanks to Paavo Salminen, who suggested looking for tools for the proof of Lemma~\ref{lem:ex of density} in~\cite{Nualart}. The Academy of Finland (grant \#339515) is gratefully acknowledged for financial support.

\section{Solving the pairings probabilities}

As explained in the introduction, our proof does not rely on properties that are specific for the random model considered, but rather on verifying certain external inputs and then applying Lemma~\ref{lem:ptt fcn conv space ppty}. To facilitate the discussion, we explain below the proof in the context and notation of the Ising model.

\subsection{The Ising model}
\label{subsec:Ising pairing probas}

\paragraph{Notation and domain approximations}
Let $\Omega^\delta$ be simply-connected discrete domains on $\delta \bbZ^2$ with marked boundary points $ p_1^\delta, \ldots, p_{2N}^\delta$. Let $\bbP^{\Omega^\delta}$ denote the critical Ising model on $(\Omega^\delta; p_1^\delta, \ldots, p_{2N}^\delta)$ with alternating $+$ and $-$ boundary conditions; 
see Figure~\ref{fig:Ising} for an illustration and Appendix~\ref{app:Ising} for the precise definition.
We say that the discrete domains with marked boundary points $(\Omega^\delta; p_1^\delta, \ldots, p_{2N}^\delta)$ on $\delta \bbZ^2$ converge in the \textit{Carath\'{e}odory sense} if the conformal maps $\varphi^\delta: \bbH \to \Omega^\delta $ (normalized in some fixed manner, e.g., at the point $i \in \bbH$) converge to some $\varphi$ uniformly over compact subsets of $\bbH$ and also the inverse images of the boundary points converge: $(\varphi^\delta)^{-1}(p_i^\delta) \to V_0^{i}$ for $1 \leq i \leq 2N$, for some $V^1_0, \ldots, V^{2N}_0 \in \bbR$;
we also assume that $\varphi(\bbH)$ is bounded (cf.~\cite{Chelkak-Smirnov:Discrete_complex_analysis_on_isoradial_graphs}) and that the limit points $V_1^{0}, \ldots, V_0^{2N}$ are distinct and labelled so that $V_0^{1} < \ldots < V_0^{2N}$.  
Let $\gamma^\delta_1, \ldots, \gamma^\delta_N$ denote the chordal curves in $\bbH$ obtained by mapping the Ising interfaces on $(\Omega^\delta; p_1^\delta, \ldots, p_{2N}^\delta)$ by $(\varphi^\delta)^{-1}$.  The Ising model conditional on a pairing, indexed here by $\alpha$, is denoted below by $\bbP^{\Omega^\delta} [ \; \cdot \; | \; $pairing $ \alpha] =: \bbP^{\Omega^\delta}_\alpha [ \; \cdot \; ]$.

\paragraph{External inputs}

The proof assumes the following:
\begin{itemize}
\item[i)] For any sequence $\delta \to 0$, there exists a subsequence along which the curves $\gamma^\delta_1, \ldots, \gamma^\delta_N$ under $\bbP^{\Omega^\delta}$ (resp. under $\bbP^{\Omega^\delta}_\alpha$, for all pairings $\alpha$ appearing with $\bbP^{\Omega^\delta}$-positive probability for arbitrarily small values of $\delta$) converge weakly
in the metric of unparametrized curves~\cite{KS}.
\item[ii)] For some starting point $V^j_0$ and in some localization neighbourhood, the initial segment of incident limit curve of the $\bbP^{\Omega^\delta}$:s (resp. of all the above $\bbP^{\Omega^\delta}_\alpha$:s) is a local multiple $\SLE(\kappa)$ with partition function $\calZ$ (resp. $\calZ_\alpha$).
\item[iii)] There is an explicit expression of $\calZ$ as a linear combination of $\calZ_\alpha$:s.
\end{itemize}

\paragraph{Checking the inputs}
For the Ising model, input (i) follows from the weak convergence criterion~\cite[Theorem~1.5]{KS} which is verified in~\cite[Corollary~1.7]{CDCH} (see also~\cite[Theorem~4.1]{mie}). As for input (ii), the local limit identification for $\bbP^{\Omega^\delta}$ is proven in~\cite[Theorem~1.1]{Izyurov-critical_Ising_interfaces_in_multiply_connected_domains} (by using input (i) to deduce the existence of subsequential limits and a martingale observable to identify them).  The SLE parameter is $\kappa = 3$ and the partition functions $\calZ: \frX \to \bbR_{>0}$ is given by the Pfaffian formula
\begin{align}
\label{eq:Ising ptt fcn}
\calZ ( \bsx ) = \mathrm{Pf} \left( \tfrac{1}{x_j - x_i} \right)_{i, j = 1}^{2N}
\end{align}
(where the diagonal elements of the matrix should be interpreted as zeroes).
For the conditional measures $\bbP^{\Omega^\delta}_\alpha$ for any $\alpha$, any subsequential limit curves (as in input~(i)) are identified as so-called \textit{global multiple SLEs} with pairing $\alpha$ (this proof essentially boils down to the uniqueness of the latter) in~\cite{BPW}. The initial segments of the latter were described in~\cite[Theorem~1.3]{PW} as $\kappa = 3$ local multiple SLEs with the partition function $\calZ_\alpha$ given in~\cite[Equation~(3.7)]{PW}, concluding input~(ii) for the measures $\bbP^{\Omega^\delta}_\alpha$. As regards~(iii), we have $ \sum_\alpha \mathcal{Z}_\alpha (\bsx) = \mathcal{Z} (\bsx) $ by~\cite[Lemma~4.13]{PW} where the sum is over all $C_N$ pairings.

We note in passing that the earlier proof of Theorem~\ref{thm:Ising pairing probas} in~\cite{PW18} requires inputs~(i) and~(iii), as well as input~(ii) for the unconditional measures $\bbP^{\Omega^\delta}$ (but not for the $\bbP^{\Omega^\delta}_\alpha$:s).\footnote{Unlike the present proof, the one in~\cite{PW18} additionally relies on explicit expressions for certain GFF level line connection probabilities (sums of those in Section~\ref{subsec:GFF lvl lines});~see Appendix A in \cite{PW18}.}

\begin{theorem}[Pairing probabilities in the critical Ising model]
\label{thm:Ising pairing probas}
Let $(\Omega^\delta; p_1^\delta, \ldots, \newline p_{2N}^\delta)$ on $\delta \bbZ^2$ converge in the Carath\'{e}odory sense as $\delta \downarrow 0$; then, we have
\begin{align}
\label{eq:main thm}
\bbP^{\Omega^\delta} [ \; \mathrm{pairing} \; \alpha \; ] \longrightarrow  \mathcal{Z}_\alpha ( V^1_0, \ldots, V^{2N}_0 ) \Big/ \mathcal{Z} ( V^1_0, \ldots, V^{2N}_0 )  \qquad \text{as } \delta \downarrow 0,
\end{align}
where $\mathcal{Z} $ and $\mathcal{Z}_\alpha $ are the $\kappa=3$ multiple SLE partition functions defined in Equation~\eqref{eq:Ising ptt fcn} and in~\cite[Equation~(3.7)]{PW}, respectively.
\end{theorem}

\begin{proof}
Suppose, by input~(i), that a subsequence of $\delta$'s has already been extracted so that $( \gamma^\delta_1, \ldots, \gamma^\delta_N )$ converge weakly
under $\bbP^{\Omega^\delta}$ as well as under $\bbP^{\Omega^\delta}_\alpha$ for all pairings $\alpha$ appearing with $\bbP^{\Omega^\delta}$-positive probability for arbitrarily small $\delta$:s. Denote the limiting laws by $\bbP$ and $\bbP_\alpha$, respectively. It clearly suffices to prove the claimed limit~\eqref{eq:main thm} along such a subsequence.

The connection probabilities at least converge to some numbers along this subsequence:
$$\bbP^{\Omega^\delta} [ \; \mathrm{pairing} \; \alpha \; ] \to  \bbP [  \; \mathrm{pairing} \; \alpha \; ] =: p_\alpha \in [0,1]. $$
Hence we also have the convex combination formula
\begin{align}
\label{eq:conv comb meas}
\bbP = \sum_\alpha p_\alpha \bbP_\alpha
\end{align}
for the limiting measures, where the sum runs over the above mentioned $\alpha$:s. By input (ii), under $\bbP$ (resp. $\bbP_\alpha$), the driving function of the initial segment of the curve starting from a fixed boundary point in a fixed localization neighbourhood has the local multiple SLE law $\sfP$ (resp. $\sfP_\alpha$) with a known partition function $\calZ$ (resp. $\calZ_\alpha$). From~\eqref{eq:conv comb meas} and Lemma~\ref{lem:ptt fcn conv space ppty}, we infer that
\begin{align*}
\mathcal{Z} ( \bsx ) 
= \mathcal{Z} (V^1_0, \ldots, V^{2N}_0 ) \sum_\alpha \tfrac{ p_\alpha }{ \mathcal{Z}_\alpha (V^1_0, \ldots, V^{2N}_0 ) } \mathcal{Z}_{\alpha} ( \bsx ).
\end{align*}
Finally, by input (iii), for all models in this note with the formula $\mathcal{Z}( \bsx )  = \sum_\alpha \mathcal{Z}_\alpha ( \bsx )$ where the sum is over all pairings, and with all the functions $\mathcal{Z}_\alpha$ being linearly independent\footnote{In all our proofs, and conjecturally for all multiple SLE convergences, the $\calZ_\alpha$:s corresponding to given pairings are so-called the \textit{SLE pure partition functions}; their linear independence follows
from their defining asymptotic property~\cite[Eq.~(ASY)]{PW}, by induction over $N$.}, we deduce
\begin{align*}
p_\alpha =  \mathcal{Z}_\alpha (V^1_0, \ldots, V^{2N}_0 ) \big/ \mathcal{Z} ( V^1_0, \ldots, V^{2N}_0 ) , \qquad \text{for all pairings }\alpha.
\end{align*}
This concludes the proof.
\end{proof}

\subsection{The multiple harmonic explorer}

We now briefly overview an analogous result for the multiple harmonic explorer introduced in~\cite{mie}. Let $H$ denote the honeycomb lattice, let $\Omega$ consist of the faces on or inside a simple loop path on the faces of $H$ (i.e., on the dual graph $H^*$), and colour the faces on this loop into $N$ white and $N$ black segments, changing colours at $2N$ boundary points $p_1, \ldots, p_{2N}$. The harmonic explorer explores an interface between black and white faces, starting from the edge emanating from $p_1$ into $\Omega$: if the face right in front of this edge is already coloured, the next edge of the interface is evident; otherwise, launch a dual random walk from this face, colour the face according to the first coloured face hit by the walk, and then deduce the next edge. Iteratively, one then adds new edges and new coloured faces, e.g., in a circular order to the boundary points, until $N$ entire chordal interfaces have been revealed. We denote by $\bbP^{\Omega}$ probability measure of the harmonic explorers on $(\Omega; p_1, \ldots, p_{2N})$.

\begin{theorem}[Pairing probabilities of multiple harmonic explorers]
Let $(\Omega^\delta; p_1^\delta, \newline \ldots, p_{2N}^\delta)$ be discrete domains on the scaled lattice $\delta H$, as described above, and converging in the Carath\'{e}odory sense as $\delta \downarrow 0$; then, we have
\begin{align*}
\bbP^{\Omega^\delta} [ \; \mathrm{pairing} \; \alpha \; ] \longrightarrow  \mathcal{Z}_\alpha ( V^1_0, \ldots, V^{2N}_0 ) \Big/ \mathcal{Z} ( V^1_0, \ldots, V^{2N}_0 ) \qquad \text{as } \delta \downarrow 0,
\end{align*}
where $\mathcal{Z}_\alpha $ and $\mathcal{Z} $ are the $\kappa=4$ multiple SLE partition functions defined in Equation~\eqref{eq:GFF ptt fcn} and in~\cite[Equation~(3.7)]{PW}.
\end{theorem}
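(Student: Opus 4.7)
The plan is to repeat the Ising argument essentially verbatim, replacing only the three model-specific inputs (i)--(iii). Denote by $\gamma^\delta_1, \ldots, \gamma^\delta_N$ the images under $(\varphi^\delta)^{-1}$ of the harmonic-explorer interfaces on $(\Omega^\delta; p_1^\delta, \ldots, p_{2N}^\delta)$, and write $\bbP^{\Omega^\delta}_\alpha[\,\cdot\,] = \bbP^{\Omega^\delta}[\,\cdot\,\mid\,\text{pairing }\alpha]$. Along a suitable subsequence of $\delta\downarrow 0$, the unparametrized $N$-tuples of curves should converge weakly both under $\bbP^{\Omega^\delta}$ and under each $\bbP^{\Omega^\delta}_\alpha$ to limiting laws $\bbP$ and $\bbP_\alpha$; along such a subsequence the pairing probabilities converge to some $p_\alpha \in [0,1]$, and the convex-combination identity $\bbP = \sum_\alpha p_\alpha\,\bbP_\alpha$ is automatic.

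Next, I would identify under $\bbP$ (resp.\ $\bbP_\alpha$) the Loewner driving function of an initial segment from a fixed launching point, stopped when it exits a fixed localization neighbourhood, as a local multiple $\mathrm{SLE}(4)$ driving function with partition function $\calZ$ (resp.\ $\calZ_\alpha$) as in~\eqref{eq:GFF ptt fcn}. Granted this, Lemma~\ref{lem:ptt fcn conv space ppty} applied to $\bbP = \sum_\alpha p_\alpha \bbP_\alpha$ yields
\begin{align*}
\calZ(x_1,\ldots,x_{2N}) = \calZ(V_0^1,\ldots,V_0^{2N})\sum_\alpha \frac{p_\alpha}{\calZ_\alpha(V_0^1,\ldots,V_0^{2N})}\,\calZ_\alpha(x_1,\ldots,x_{2N}),
\end{align*}
and combining this with the relation $\calZ = \sum_\alpha \calZ_\alpha$ and the linear independence of the $\calZ_\alpha$'s extracts $p_\alpha = \calZ_\alpha(V_0^1,\ldots,V_0^{2N})/\calZ(V_0^1,\ldots,V_0^{2N})$ at once.

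The substantive work is thus the verification of the harmonic-explorer analogues of the inputs (i)--(iii). Input (i), i.e.\ precompactness of $(\gamma_1^\delta,\ldots,\gamma_N^\delta)$ in the unparametrized curve topology under both $\bbP^{\Omega^\delta}$ and $\bbP^{\Omega^\delta}_\alpha$, is delivered by the Kemppainen--Smirnov criterion~\cite[Theorem~1.5]{KS}, which is verified for the multiple harmonic explorer in~\cite{mie}. For input (ii), the local multiple $\mathrm{SLE}(4)$ identification of the unconditional limit $\bbP$ with partition function $\calZ$ is precisely the main convergence theorem of~\cite{mie}. I expect the conditional half of (ii) to be the main obstacle: to pin down each subsequential limit of $\bbP^{\Omega^\delta}_\alpha$ as the global multiple $\mathrm{SLE}(4)$ with pairing $\alpha$, and hence locally as a multiple $\mathrm{SLE}(4)$ with partition function $\calZ_\alpha$. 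Exactly as in the Ising case, this should follow from the uniqueness of global multiple $\mathrm{SLE}(\kappa)$'s for $\kappa \le 4$ proved in~\cite{BPW} combined with the local-to-global correspondence of~\cite[Theorem~1.3]{PW}, once one checks that under $\bbP^{\Omega^\delta}_\alpha$ the conditioned curves satisfy the non-crossing and boundary-hitting properties required to invoke that uniqueness; these features are implicit in the harmonic-explorer analysis of~\cite{mie}. Finally, input (iii), i.e.\ $\sum_\alpha \calZ_\alpha = \calZ$ together with the linear independence of the $\calZ_\alpha$'s, is~\cite[Lemma~4.13]{PW} specialized to $\kappa = 4$.
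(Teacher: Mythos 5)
Your proposal is correct and follows essentially the same route as the paper, which simply declares the proof word-by-word identical to the Ising case and substitutes the harmonic-explorer versions of inputs (i)--(iii) (precompactness and the unconditional local multiple $\mathrm{SLE}(4)$ identification from \cite[Theorem~6.10]{mie}, the conditional identification from \cite[Theorem~5.10]{mie} via global multiple SLEs, and the sum rule from \cite{PW}). The only nit is a citation detail: for $\kappa=4$ the relation $\sum_\alpha \calZ_\alpha = \calZ$ is \cite[Lemma~4.14]{PW}, not Lemma~4.13 (which is the Ising/$\kappa=3$ case).
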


As explained above, we just need to verify the inputs of the previous proof. Input (i), as well as input (ii) for the unconditional measures $\bbP^{\Omega^\delta}$ are proven in~\cite[Theorem~6.10]{mie}; the SLE parameter is $\kappa = 4$ and partition function is
\begin{align}
\label{eq:GFF ptt fcn}
\mathcal{Z} ( \bsx ) 
 = \prod_{1 \leq i < j \leq 2N} (x_j - x_i)^{(-1)^{j-i}/2}.
\end{align}
Input (ii) for the conditional measures $\bbP^{\Omega^\delta}_\alpha$ is~\cite[Theorem~5.10]{mie} (whose proof relies on the global multiple SLEs similarly to the Ising model case), and the partition functions $\calZ_\alpha$ are as defined in~\cite[Equation~(3.7)]{PW} with $\kappa = 4$.
Finally, input~(iii), incidentally with the exact same relation 
\begin{align}
\label{eq:GFF cvx space}
\mathcal{Z} ( \bsx )  = \sum_\alpha \mathcal{Z}_\alpha ( \bsx ),
\end{align}
as for the Ising model, is proven in~\cite[Lemma~4.14]{PW}.

\subsection{The level lines of the Gaussian free field}
\label{subsec:GFF lvl lines}

We yet explicate the analogous proof for multiple level lines of the Gaussian free field (GFF).
Let $\bbP$ be the GFF measure (see Appendix~\ref{app:GFF}) in $\bbH$ with the following alternating boundary conditions: given $V^1_0 < \ldots < V_0^{2N}$, the boundary condition at $x \in \bbR$ is set to be $\lambda$ if the number of marked boundary points strictly left of $x$ is even, and $- \lambda$ if odd; here we denote $\lambda = \sqrt{\pi / 8}$, following the GFF normalization convention of~\cite{Werner-GFF}. 
While the GFF cannot be represented as a continuous function, and it therefore has no level lines in the usual sense, level lines do exist in the sense of a suitable coupling (see Proposition~\ref{prop:GFF lvl lines} in the appendix). In particular, these level lines are disjoint chordal curves between the boundary points $V^1_0, \ldots, V_0^{2N}$, hence forming some pairing.

\begin{theorem}[Pairing probabilities of GFF level lines]
\label{thm:GFF lvl lines}
The pairing probabilities of the GFF level lines are given by
\begin{align*}
\bbP [ \; \mathrm{pairing} \; \alpha \; ] =  \mathcal{Z}_\alpha ( V^1_0, \ldots, V^{2N}_0 ) \Big/ \mathcal{Z} ( V^1_0, \ldots, V^{2N}_0 ) ,
\end{align*}
where $\mathcal{Z}_\alpha $ and $\mathcal{Z} $ are the $\kappa=4$ multiple SLE partition functions defined in Equation~\eqref{eq:GFF ptt fcn} and in~\cite[Equation~(3.7)]{PW}.
\end{theorem}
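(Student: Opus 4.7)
The plan is to mimic the Ising model argument verbatim, with the simplification that the GFF is already a continuum object, so no lattice scaling limit step is required; one works directly with the measures $\sfP$ on collections of level lines. Denote by $\sfP_\alpha$ the conditional law of the $N$ level lines given the event $\{\mathrm{pairing}=\alpha\}$, and set $p_\alpha = \sfP[\,\mathrm{pairing}\ \alpha\,]$. The trivial convex combination
\begin{align*}
\sfP = \sum_\alpha p_\alpha \sfP_\alpha
\end{align*}
is then available without any subsequence extraction.

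Next, fix the starting point $V_0^j$ and a localization neighbourhood, and consider the driving function of the initial segment of the $j$-th curve up to its exit time from this neighbourhood. I would invoke two ingredients, the analogues of input (ii) of the Ising proof. First, under the unconditional measure $\sfP$, the Schramm--Sheffield coupling of the GFF with $\mathrm{SLE}(4)$ (stated as Proposition~\ref{prop:GFF lvl lines} in the appendix), together with the explicit form of the drift produced by the alternating boundary data, identifies this driving function as a local multiple $\mathrm{SLE}(4)$ driving function with partition function $\calZ$ given by~\eqref{eq:GFF ptt fcn}. Second, under each conditional measure $\sfP_\alpha$, the tuple of level lines is a global multiple $\mathrm{SLE}(4)$ with link pattern $\alpha$ (this is the content of the classification/uniqueness results for global multiple SLEs in~\cite{BPW}), whose local Loewner description at $V_0^j$ is that of a local multiple $\mathrm{SLE}(4)$ driving function with partition function $\calZ_\alpha$ by~\cite[Theorem~1.3]{PW}.

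With these two identifications in hand, Lemma~\ref{lem:ptt fcn conv space ppty} applied to $\sfP=\sum_\alpha p_\alpha \sfP_\alpha$ yields
\begin{align*}
\calZ(x_1,\ldots,x_{2N}) \;=\; \calZ(V_0^1,\ldots,V_0^{2N})\sum_\alpha \frac{p_\alpha}{\calZ_\alpha(V_0^1,\ldots,V_0^{2N})}\,\calZ_\alpha(x_1,\ldots,x_{2N}).
\end{align*}
Combining this with input (iii), namely the decomposition $\calZ=\sum_\alpha \calZ_\alpha$ from~\cite[Lemma~4.14]{PW} (equation~\eqref{eq:GFF cvx space}), and the linear independence of the $\calZ_\alpha$'s (from their asymptotic behaviour, as in the footnote to the Ising theorem), matching coefficients gives the desired formula $p_\alpha = \calZ_\alpha(V_0^1,\ldots,V_0^{2N})/\calZ(V_0^1,\ldots,V_0^{2N})$.

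The only non-routine step is input (ii) for the conditional measures $\sfP_\alpha$: one needs to know that conditioning the GFF level lines on a given pairing produces precisely the global multiple $\mathrm{SLE}(4)$ associated with $\alpha$, and that its local driving function matches a local multiple SLE with partition function $\calZ_\alpha$. This identification is the main technical obstacle, but it is essentially contained in the combination of~\cite{BPW} (uniqueness of global multiple SLEs) and~\cite[Theorem~1.3]{PW} (local description of global multiple SLEs), so no new analysis is required beyond citing these inputs and checking that the $\kappa=4$ GFF level-line setup fits their hypotheses.
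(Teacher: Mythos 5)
Your proposal is correct and follows essentially the same route as the paper: identify the unconditional and $\alpha$-conditional driving-function laws as local multiple $\mathrm{SLE}(4)$'s with partition functions $\calZ$ and $\calZ_\alpha$ (via the level-line coupling, the uniqueness of global multiple SLEs from~\cite{BPW}, and~\cite[Theorem~1.3]{PW}), then apply Lemma~\ref{lem:ptt fcn conv space ppty} together with $\calZ=\sum_\alpha\calZ_\alpha$ and linear independence. The only cosmetic difference is that the paper packages the conditional-law identification as Corollary~\ref{cor:GFF lvl lines} in the appendix, proven from Proposition~\ref{prop:GFF lvl lines}(iv), which is exactly the combination of inputs you cite.
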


The proof is still essentially identical. Since the underlying model is continuous \textit{per se}, one can either start the proof from Equation~\eqref{eq:conv comb meas}, or add a nonsensical ``mesh size'' parameter $\delta$ and set all the probability measures to be the same independently of $\delta$. Note that here we also have to restrict $\alpha$ to those pairings that are possible (due to the precise form of Corollary~\ref{cor:GFF lvl lines}; \textit{a posteriori} by Theorem~\ref{thm:GFF lvl lines} all pairings are possible). As regards checking the inputs,~(i) is trivial. Input~(ii) for the unconditional measures is Proposition~\ref{prop:GFF lvl lines}(v) in the Appendix. For the $\alpha$-conditional measures, the GFF level lines are the $\alpha$ global multiple $\mathsf{SLE}(4)$ by Corollary~\ref{cor:GFF lvl lines}, and any initial segment of the latter is by~\cite[Theorem~1.3]{PW} the local multiple $\mathsf{SLE}(4)$ with the partition function $\calZ_\alpha$ defined in~\cite[Equation~(3.7)]{PW}. Input~(iii) is Equation~\eqref{eq:GFF cvx space} above.

\begin{remark}
We have for simplicity considered here a boundary condition alternating between $+ \lambda$ and $-\lambda$, as in~\cite{PW}. \emph{Mutatis mutandis}, a similar proof applies for any boundary condition with $N$ jumps $+ 2 \lambda$ and $N$ jumps $- 2 \lambda$. This provides a concise proof of~\cite[Theorem~4.1]{Liu-Wu} (and an example of input (iii) not being of the form $\mathcal{Z} ( \bsx )  = \sum_\alpha \mathcal{Z}_\alpha ( \bsx )$).
\end{remark}

\section{The SLE theory lemmas}

The following uniqueness lemma is actually the main new contribution in Lemma~\ref{lem:ptt fcn conv space ppty}, and crucial in our applications. We highlight that we study local multiple SLEs in a fixed geometry, not as collections of measures indexed by geometries --- in the latter case the analogous lemma is immediate~\cite[Theorem~A.4(a)]{KP-pure_partition_functions_of_multiple_SLEs}.

\begin{lemma}[Uniqueness of the multiple SLE partition function]
\label{lem:dr fcns <-> measures}
Fix the launching points $V_0^1 < \ldots < V^{2N}_0$, an index $1 \leq j \leq 2N$, and a localization neighbourhood of $V^j_0$. Let $ \mathcal{Z}_1 $ and $\mathcal{Z}_2$ be two local multiple $\SLE(\kappa)$ partition functions and $\sfP_1$ and $\sfP_2$ the corresponding laws of the driving function. Then, $\sfP_1$ and $\sfP_2$ are equal if and only if $ \mathcal{Z}_1 $ and $\mathcal{Z}_2$ are constant multiples of each other.
\end{lemma}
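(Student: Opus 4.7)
The ``if'' direction is immediate: when $\calZ_1 = c\calZ_2$ for some constant $c>0$, the drifts in~\eqref{eq:NSLE SDE} coincide and hence $\sfP_1 = \sfP_2$. For the converse, my plan is to use the Girsanov-martingale description of local multiple SLE used, e.g., in~\cite{KP-pure_partition_functions_of_multiple_SLEs}, to derive a pointwise identity $\calZ_1/\calZ_2 \equiv c$ on the random curve traced out by the process, and then to extend it to all of $\frX$ by real-analyticity.

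In more detail: writing $\bbP_\kappa$ for the law of the driving function of the standard chordal SLE$(\kappa)$ started from $V_0^j$ (so $dW_t = \sqrt{\kappa}\,d\beta_t$ with the other boundary points carried by the Loewner flow), a direct It\^o computation using the PDEs~\eqref{eq:Z-PDEs} shows that
\begin{align*}
M_t^i \;:=\; \calZ_i(V_t^1,\ldots,W_t,\ldots,V_t^{2N})\,\prod_{k \neq j} g_t'(V_0^k)^h
\end{align*}
is a positive local $\bbP_\kappa$-martingale for $i=1,2$, with $d\sfP_i/d\bbP_\kappa\big|_{\calF_{t\wedge\tau}} = M_{t\wedge\tau}^i/M_0^i$. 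Crucially, the conformal factor $\prod_{k\neq j} g_t'(V_0^k)^h$ is the same for $i=1$ and $i=2$. Assuming $\sfP_1 = \sfP_2$, equating the two Radon--Nikodym densities as continuous processes ($\bbP_\kappa$-a.s.) and cancelling this common factor will yield
\begin{align*}
\frac{\calZ_1(V_t^1,\ldots,W_t,\ldots,V_t^{2N})}{\calZ_2(V_t^1,\ldots,W_t,\ldots,V_t^{2N})} \;=\; c \;:=\; \frac{\calZ_1(V_0^1,\ldots,V_0^{2N})}{\calZ_2(V_0^1,\ldots,V_0^{2N})}
\end{align*}
for all $t \in [0,\tau]$, $\bbP_\kappa$-a.s.

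Writing $F := \calZ_1 - c\calZ_2$, the PDE system~\eqref{eq:Z-PDEs} summed over $j$ becomes a uniformly elliptic second-order equation with real-analytic coefficients on the connected domain $\frX$, so by elliptic regularity $F$ is real-analytic there. By the identity principle for real-analytic functions, it will then suffice to show that the marginal law of $(V_t^1,\ldots,W_t,\ldots,V_t^{2N})$ at some $t>0$ has support with nonempty interior in $\frX$. For this I would appeal to H\"ormander hypoellipticity: the diffusion field $X_0 = \sqrt{\kappa}\,\partial_{x_j}$ and the Loewner drift $X = \sum_{i \neq j}\tfrac{2}{x_i - x_j}\partial_{x_i}$ have iterated brackets $[X_0,\ldots,[X_0,X]]$ of the form $\sum_{i \neq j}\tfrac{c_k}{(x_i - x_j)^{k+1}}\partial_{x_i}$, and a Vandermonde-in-$1/(x_i - x_j)$ argument shows that these vectors, together with $X_0$ itself, span $\bbR^{2N}$ at every point of $\frX$; hypoellipticity then delivers a smooth positive transition density at $t>0$, its support has nonempty interior, and $F\equiv 0$ on $\frX$ follows.

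The bulk of the work is this last hypoellipticity/controllability step: with only one-dimensional Brownian forcing in a $2N$-dimensional configuration space, one must check that the Loewner coupling genuinely propagates Brownian fluctuations into all $2N-1$ remaining coordinates. The remaining ingredients---the Girsanov identification of $\sfP_i$, the cancellation of the common conformal factor, and the real-analytic extension from a set with nonempty interior---are essentially standard.
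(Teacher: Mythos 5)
Your proposal is correct and follows essentially the same route as the paper: a Girsanov reduction to the constancy of $\calZ_1/\calZ_2$ along the curve, the H\"{o}rmander bracket/Vandermonde computation showing that the Loewner diffusion has an absolutely continuous law at positive times, and a unique-continuation argument for the summed elliptic PDE. The only cosmetic differences are that the paper invokes Duffin's strong maximum principle where you use real-analyticity plus the identity principle, and that it inserts an explicit smooth cutoff into the SDE before applying the H\"{o}rmander criterion (and works with the event that the localization exit time has not yet occurred) --- regularizations your write-up would also need, since the raw Loewner drift is not Lipschitz near collisions and the identity $\calZ_1/\calZ_2=c$ is only available up to the stopping time.
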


The rest of this section constitutes the proofs of Lemmas~\ref{lem:ptt fcn conv space ppty} and~\ref{lem:dr fcns <-> measures}; we first prove the former (assuming the latter).


\begin{proof}[Lemma~\ref{lem:ptt fcn conv space ppty}]
 First, any convex combination of partition functions
\begin{align}
\label{eq:tilde-Z}
\tilde{\mathcal{Z}} ( \bsx ) = \sum_\alpha c_\alpha \mathcal{Z}_\alpha ( \bsx ),
\end{align}
where $c_\alpha \geq 0 $ with $\sum_\alpha c_\alpha = 1$ are constants (independent of $\bsx$), is clearly also a partition function. By~\cite[Theorem~A.4(c)]{KP-pure_partition_functions_of_multiple_SLEs} (whose proof is a short computation with the Girsanov martingales~\eqref{eq:Girs mg 1} below), the law $\tilde{\sfP}$ of the driving function corresponding to the partition function $\tilde{ \mathcal{Z}}$ and the launching points $V^1_0, \ldots, V^{2N}_0$, is a convex combination of those corresponding to $\mathcal{Z}_\alpha$:
\begin{align}
\label{eq:tilde-P}
\tilde{ \sfP } = \sum_\alpha \tfrac{ c_\alpha \mathcal{Z}_\alpha (V^1_0, \ldots, V^{2N}_0 ) }{ \tilde{\mathcal{Z}} (V^1_0, \ldots, V^{2N}_0 ) } \sfP_\alpha.
\end{align}

Coming back to the setup of the present lemma, let us consider the specific convex combination of partition functions with 
\begin{align}
\label{eq:coeffs in tilde-Z}
c_\alpha = C' \tfrac{ p_\alpha }{ \mathcal{Z}_\alpha (V^1_0, \ldots, V^{2N}_0 ) }, 
\end{align}
with a normalizing constant $C'$ matched so that $\sum_\alpha c_\alpha = 1$; the corresponding convex combination measure thus becomes 
\begin{align*}
\tilde{ \sfP }  = \sum_\alpha C' \tfrac{ p_\alpha  }{ \tilde{\mathcal{Z}} ( V^1_0, \ldots, V^{2N}_0 ) } \sfP_\alpha = \sum_\alpha  p_\alpha  \sfP_\alpha ,
\end{align*}
where we observed that $ C'  = \tilde{\mathcal{Z}} (V^1_0, \ldots, V^{2N}_0 ) $ by~\eqref{eq:tilde-Z} and \eqref{eq:coeffs in tilde-Z}. 
The convex combination $\sfP = \sum_\alpha  p_\alpha  \sfP_\alpha $ thus coincides with a multiple SLE law $\tilde{\sfP}$ with the partition function~\eqref{eq:tilde-Z}--\eqref{eq:coeffs in tilde-Z}. Since, by Lemma~\ref{lem:dr fcns <-> measures}, the law of a multiple SLE determines its driving function up to constant multiplication, this identifies the partition function of $\sfP$ as~\eqref{eq:conv comb ptt fcns}
\end{proof}


\begin{proof}[Lemma~\ref{lem:dr fcns <-> measures}]
The implication ``if'' is obvious, so we concentrate on ``only if''. 
Let us
assume for a lighter notation that $j=1$ and that all the processes are stopped at the exit time of the localization neighbourhood, but omit denoting for this stopping. Let $(\mathscr{F}_t)_{t \geq 0}$ be the natural right-continuous filtration of the driving function $W_t$ (hence stopped).

First, denoting $\rho = \frac{ \mathcal{Z}_2 }{\mathcal{Z}_1}$, it is fairly standard that
\begin{align}
\label{eq:Girs mg 1}
M_t :=  \rho (W_t, V^2_t, \ldots, V^{2N}_t )
\end{align}
is the measure-changing martingale from $\sfP_1$ to $\sfP_2$, i.e.,
\begin{align}
\label{eq:RNder}
\tfrac{\mathrm{d} \mathsf{P}_2 }{ \mathrm{d} \mathsf{P}_1 } \vert_{\mathscr{F}_t} = M_t/M_0.
\end{align}
Indeed, the PDEs~\eqref{eq:Z-PDEs} for the two partition functions imply by direct computation (see~\cite[Theorem~1]{Duffin} for a clever way) that $\rho $ satisfies
\begin{align}
\label{eq:rho-PDEs}
\Bigg[ \frac{\kappa}{2 } \partial_{\jmath}^2 + \kappa \frac{\partial_{\jmath} \mathcal{Z}_1 (  \bsx ) }{ \mathcal{Z}_1 ( \bsx )} \partial_{\jmath} + \sum_{\substack{i = 1 \\ i \neq \jmath}}^{2N}  \frac{2}{x_i - x_{\jmath} } \partial_i \Bigg] \rho ( \bsx ) = 0, \qquad \forall 1 \leq \jmath \leq 2N,
\end{align}
which with $\jmath=1$ shows that the It\^{o} drift term of $M_t$ vanishes under $\sfP_1$. Next, $M_t$ can be shown bounded up to the stopping time (hence a genuine martingale) by the translation invariance of $\rho$ and a standard harmonic measure argument (e.g.,~\cite[Proof of Lemma~B.1]{mie2}). Equation~\eqref{eq:RNder} then follows from Girsanov's theorem.
In particular, the two measures are equal if and only if $M_t$ is $\sfP_1$-almost surely a constant process.

Now,
summing the PDEs~\eqref{eq:rho-PDEs}
over all $\jmath$ yields an elliptic PDE that is hence also satisfied by $\rho$. The strong maximum principle of elliptic PDEs then states that $ \rho $ is either an everywhere constant function, or it is nowhere locally constant (cf.~\cite[Theorem~2]{Duffin}).
In the former case the proof is finished, so the rest of the proof consists of assuming the latter and showing that $M_t$ then cannot be almost surely a constant process.

Let $\sfP$ be a third measure on driving functions, under which $W_\cdot$ has the law of a Brownian motion launched from $V^1_0$ and running at speed $\kappa$, i.e., $V^1_0 + \sqrt{\kappa} \beta_t$, but with the same stopping as throughout. Yet another Girsanov transform is
\begin{align*}
\frac{\mathrm{d} \mathsf{P}_1 }{ \mathrm{d} \mathsf{P} } \vert_{\mathscr{F}_t} =  \calZ_1 (W_t, V^2_t, \ldots, V^{2N}_t) \prod_{i=2}^{2n} \left( g_t'(V^i_0) \right)^h ,
\end{align*}
where $g_t'$ denotes the derivative of the Loewner mapping-out function. It thus suffices to show that $\rho (W_t, V^2_t, \ldots, V^{2N}_t )$ is not a constant process under $\sfP$. Note that typical SDE results are derived for Lipschitz SDEs, while $\diff V^{i}_t = \frac{2 \diff t}{V^{i}_t - W_t}$ is not Lipschitz if small $| V^{i}_t - W_t |$ are allowed. However, up to the stopping time, the above-mentioned harmonic measure argument lower-bounds $| V^{i}_t - W_t |$. Thus, rather than $\sfP$, we will actually perform the SDE analysis on the \textit{non-stopped} process
\begin{align}
\label{eq:SDE smoothed}
\begin{cases}
\diff \tilde{V}^1_t = \sqrt{\kappa} \diff \beta_t \\
\diff \tilde{V}^j_t = \theta ( \tilde{V}^{j}_t - \tilde{V}^1_t ) \frac{2 }{\tilde{V}^j_t - \tilde{V}^1_t} \diff t, \qquad 2 \leq j \leq 2N,
\end{cases}
\end{align}
with $(\tilde{V}^1_0, \ldots, \tilde{V}^{2N}_0) = ( V^1_0, \ldots, V^{2N}_0)$,
where $\theta$ is a smooth cutoff function being one in $\bbR \setminus (- \epsilon, \epsilon)$ and zero in $[- \epsilon/2, \epsilon /2 ]$, with a small enough $\epsilon > 0$ so that up to the stopping time, the process $(\tilde{V}^1_t, \ldots, \tilde{V}^{2N}_t) $ is the same as $(W_t, V^2_t, \ldots, V^{2N}_t )$ under $\sfP$ (here and below, we sample the two from the same Brownian motion).

\begin{lemma}
\label{lem:ex of density}
For all fixed $t > 0$, the law of $(\tilde{V}^1_t, \ldots, \tilde{V}^{2N}_t)$, as defined above, is absolutely continuous with respect to the Lebesgue measure on $\bbR^{2N}$.
\end{lemma}

Assuming Lemma~\ref{lem:ex of density} for a moment, the proof is readily finished: fix $T$ small enough so that the event $E_T$ that the exit time has not occurred by $T$ has a positive probability under $\sfP$. On the event $E_T$, we have $(W_T, V^2_T, \ldots, V^{2N}_T ) = (\tilde{V}^1_T, \ldots, \tilde{V}^{2N}_T)$. On the other hand, the set $\{ \bsx \in \frX \; : \; \rho ( \bsx ) = \rho (V^1_0, \ldots, V^{2N}_0) \}$ was assumed to have Lebesgue measure zero, so by Lemma~\ref{lem:ex of density}, we have $\rho (\tilde{V}^1_T, \ldots, \tilde{V}^{2N}_T) \neq \rho ( V^1_0, \ldots, V^{2N}_0)$ with probability one. 
 Hence, $\sfP [ \rho (W_T, V^2_T, \ldots, V^{2N}_T ) \neq \rho (V^1_0, \ldots, V^{2N}_0) ] \geq \sfP [E_T] > 0$.
\end{proof}

\begin{proof}[Lemma~\ref{lem:ex of density}]
The proof is based on the Hörmander criterion in~\cite[Theorem~2.3.1]{Nualart}.\footnote{The cutoff function $\theta$ was introduced in the previous proof exactly to produce a smooth SDE with Lipschitz coefficients, as required there.} To coincide with the notations there, denote $\beta_t = \beta^1_t$ and re-write~\eqref{eq:SDE smoothed} as
\begin{align*}
\diff \tilde{\mathbf{V}}_t = \mathbf{A}_1 (\tilde{\mathbf{V}}_t) \diff \beta^1_t + \mathbf{B} (\tilde{\mathbf{V}}_t)\diff t,
\end{align*}
where $\mathbf{A}_1: \bbR^{2N} \to \bbR^{2N}$ is here given by the constant function $\mathbf{A}_1 = (\sqrt{\kappa}, 0, \ldots 0)$ 
and $\mathbf{B}: \bbR^{2N} \to \bbR^{2N}$ by $\mathbf{B} ( \bsx ) = (0, \theta ( x_2 - x_1 ) \frac{2 }{ x_2 - x_1 }, \ldots, \theta ( x_{2N} - x_1 ) \frac{2 }{ x_{2N} - x_1})$. Lemma~\ref{lem:ex of density} can then be deduced by checking that the vector fields (operators)
\begin{align*}
A_1; \quad [A_i, A_j] \text{ with } 0 \leq i, j \leq 1; \quad [A_i, [A_j, A_k]] \text{ with } 0 \leq i, j, k \leq 1; \quad \ldots
\end{align*}
at the launching point $(V^1_0, \ldots, V^{2N}_0)$ span a $2N$-dimensional vector space; here the operator commutator is the usual one and the operators are in our case
\begin{align*}
A_1 &= \sum_{i=1}^{2N} \mathbf{A}^i_1 (\bsx) \partial_i =
\sqrt{\kappa} \partial_1 \qquad \text{and} \\
A_0 &= \sum_{i=1}^{2N} \Big( \mathbf{B}^i (\bsx )  + \tfrac{1}{2} \sum_{j=1}^{2N} \mathbf{A}^j_1 ( \bsx ) \big( \partial_j \mathbf{A}^i_1 ( \bsx ) \big) \Big) \partial_i  = \sum_{i=2}^{2N} \theta ( x_i - x_1 ) \tfrac{2 }{ x_i - x_1} \partial_i.
\end{align*}

We now verify this criterion.\footnote{In technical terms, this verification is very similar to~\cite[Proof of Proposition~2.6]{PW}. The logical connection, however, is less direct: \cite{PW} studies the \textit{spatial PDEs}~\eqref{eq:Z-PDEs} via the original Hörmander criterion, while the variant in~\cite{Nualart} originates in the \textit{spatio-temporal (Fokker--Planck--Kolmogorov) PDEs}. The superficial similarity stems from the duality of the spatial-variable parts in these differential operators.} Define the vector fields $G_1 = [A_1, A_0]$ and $G_k = [A_1, G_{k-1}]$ for $k \geq 2$. By direct computation, we have
\begin{align*}
G_k ( V^1_0, \ldots, V^{2N}_0) = C_k  \sum_{i=2}^{2N} \tfrac{2}{ (V^i_0 - V^1_0 )^{k + 1}} \partial_i,
\end{align*}
where $C_k > 0$ are constant factors that are irrelevant in what follows.
In matrix algebra:
\begin{align*}
\left(
\begin{matrix}
G_1 ( V^1_0, \ldots, V^{2N}_0) \\
\vdots \\
G_{2N-1} ( V^1_0, \ldots, V^{2N}_0)
\end{matrix}
\right) 
= \Lambda M \left(
\begin{matrix}
\partial_2 \\
\vdots \\
\partial_{2N}
\end{matrix}
\right),
\end{align*}
where $\Lambda$ is a diagonal matrix, $\Lambda_{k, k} = C_k$, and $M$ is a Vandermonde (type) matrix $M_{k,i} = \frac{2 }{ (V^i_0 - V^1_0 )^{k + 1}}$. Hence, one can invert these two matrices and express the operators $\partial_2,
\ldots,
\partial_{2N}$ as linear combinations of $G_1, \ldots, G_{2N-1}$. Since in addition $A_1 \propto \partial_{1}$, the Hörmander criterion is satisfied.
\end{proof}

\begin{appendix}

\section{Precise definitions of the models}

Since our proofs did not refer to any properties specific for the random models underlying the curves, we have postponed the precise definitions to this appendix. The role of these definitions is to give a definite setup under which the inputs necessary for our results hold.

\subsection{The Ising model}
\label{app:Ising}

 Let $\Omega$ be a planar domain, bounded by a simple closed curve on $\bbZ^2$. Let $p_1, \ldots, p_{2N} \in \bbZ^2$ be distinct vertices on $\partial \Omega$. For all edges $e$ of $\bbZ^2$ on $\partial \Omega$, we give a spin boundary condition $\tau_e = +1$ if $e$ lies on one of the arcs $p_1 p_2, p_3 p_4, \ldots, p_{2N-1}p_{2N}$, and $\tau_e = -1$ otherwise. A spin configuration on (the faces of) $\Omega$ is then a map  $\sigma: (\bbZ^2)^* \cap \Omega \to \{ \pm 1\}$ from the faces to signs $\pm 1$. The Hamiltonian of a configuration is 
\begin{align*}
 H(\sigma) = - \sum_{u \sim v} \sigma_u \sigma_v - \sum_{u \sim e} \sigma_u \tau_e,
 \end{align*}
where the sums run over all face--face or face--boundary edge adjacency pairs, respectively. The Ising model is then finally defined via
\begin{align*}
\bbP^\Omega [\sigma] \propto e^{-\beta H(\sigma)}, \quad \text{ where } \quad
\beta = \beta_{crit} = \tfrac{1}{2} \log (1 + \sqrt{2})
\end{align*}
is the critical (inverse) temperature fixed throughout this note.

To define the interface curves, explore from each $p_i$ with $i$ odd, the interface on $ \bbZ^2$ between $+1$ and $-1$ spins, taking the left-most alternative whenever there is an ambiguity. By this left-turning rule, on the left of the interface there is a $(\bbZ^2)^*$-path of spins $-1$, forming the outer boundary of the $-1$ cluster adjacent to the negative boundary conditions. Consequently, such interfaces are mutually non-crossing, edge-simple, chordal interfaces on $\overline{\Omega} \cap \bbZ^2$, pairing the odd-index and even-index boundary points. 

\subsection{The Gaussian free field and its level lines}
\label{app:GFF}

Let $D \neq \bbC$ be a simply-connected planar domain and $G_D: D \times D \to \bbR$ the Green's function of the negative Laplacian operator $-\Delta$ on $D$.\footnote{Explicitly,
$
G_\bbH(z, w) = \frac{1}{2 \pi} ( \log \left| {z - w^*} \right| - \log \left| {z-w} \right|),
$
and if $\phi$ is a conformal map $\bbH \to D$, then
$
G_D (x, y) = G_\bbH (\phi^{-1} (x), \phi^{-1} (y)).
$
}
Let $\calM_D$ be the set of finite signed Borel measures $\mu$ supported on $D$ with
\begin{align*}
\int_{D \times D} G_D (x, y) \diff \mu(x) \diff \mu (y) < \infty
\end{align*}
(this is satisfied, e.g., if $\mu$ is absolutely continuous with respect to the Lebesgue measure).
The Gaussian free field (GFF) $\Gamma_D$ on $D$ with Dirichlet boundary conditions is a centered Gaussian process indexed by 
$\calM_D$ and determined by the covariance structure
\begin{align*}
\mathrm{Cov} (\Gamma_D(\mu), \Gamma_D (\nu)) = \int_{D \times D} G_D (x, y) \diff \mu(x) \diff \nu (y).
\end{align*}
If $H$ is a harmonic function on $D$ with regular enough boundary behaviour,\footnote{
In this paper, we may require that $H$ is bounded and the limits of $H \circ \phi: \bbH \to \bbR$ exist at all but finitely many real points. Generally, GFF theory often requires allowing much more general functions $H$.
}
then the GFF $\hat{\Gamma}_D$ with boundary condition $H_{| \partial D}$ is given by
\begin{align*}
\hat{\Gamma}_D (\mu) = \Gamma_D (\mu) + \int_D H(x) \diff \mu (x).
\end{align*}
We refer the reader to the textbook~\cite{Werner-GFF} for a more thorough introduction to the GFF; particular key features in the brief discussion below are its conformal property and the concept of local sets. Roughly speaking, the latter refers to a random sets on which the value of the GFF can be explored and satisfies a strong spatial Markov property, just like the strong Markov property of the Brownian motion only refers to stopping times and not any random times.

It is by now standard that the GFF has ``level lines'' or ``cliff lines'', in the sense of suitable local sets that turn out to be (initial segments of) $\SLE(4)$ type curves.\footnote{
Unlike in the rest of this note, in this appendix \textit{initial segments} of SLEs refer to growth processes stopped at a any stopping time (not only the exit time of a fixed neighbourhood) that almost surely occurs before any marked point is swallowed by the hulls $K_t$.
Also, we have so far studied SLEs only through their driving functions while local sets are random compact sets in the Hausdorff set distance. The above hulls, however, depend continuously on the driving function, for the almost sure restriction of $\SLE(4)$ onto Loewner evolutions generated by continuous simple curves~\cite{KS}, so this distinction is immaterial.
}
Note however that it is crucial for us to work with \textit{collections} of \textit{full} curves, not just initial segments of a single curve as was the case in the first level line statements~(e.g.~\cite{IG1}). Since we are not aware of a direct reference, we give the precise statement and collect the appropriate references as a proof.



\begin{figure}
\begin{center}
\includegraphics[width=8cm]{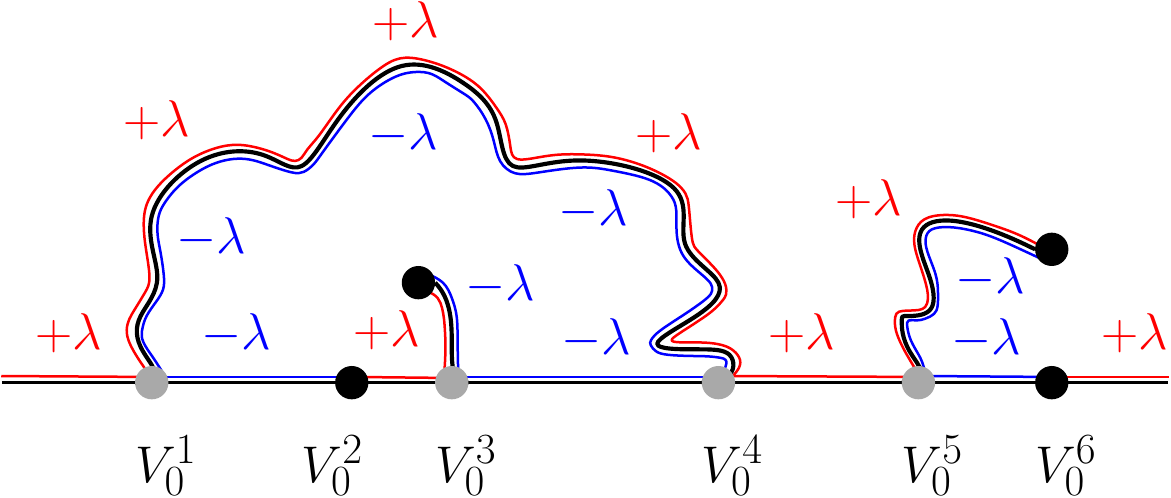}
\end{center}%
\caption{
\label{fig:GFF schematic}
The boundary conditions in the conditional law of Proposition~\ref{prop:GFF lvl lines}(ii) extend the original boundary condition in a natural ``cliff line'' manner.
}
\end{figure}

\begin{proposition}[GFF level lines]
\label{prop:GFF lvl lines}
Let $\hat{\Gamma}$ be the GFF in $\bbH$ with alternating boundary conditions, introduced in Section~\ref{subsec:GFF lvl lines}.
There exists a coupling of $\hat{\Gamma}$ and Loewner growth processes starting from the boundary points $V^1_0, \ldots, V^{2N}_0$, such that 
\begin{itemize}
\item[i)] the Loewner growth processes are generated by only $N$ disjoint simple chordal curves $ \gamma_1, \ldots, \gamma_{N}$ and their reversals,
 staying inside $\bbH$ except at the end points; and 
\item[ii)] any collection of 
initial segments $\eta_{\imath_1}, \ldots, \eta_{\imath_k}$ and full
 curves $ \gamma_{\jmath_1}, \ldots, \gamma_{\jmath_\ell} $ is a local set for $\hat{\Gamma}$, and the corresponding conditional law of $\hat{\Gamma}$ is a GFF in the reduced domain (independent GFFs on each connected component), with the boundary condition depicted in Figure~\ref{fig:GFF schematic}.
\end{itemize}
Furthermore,
\begin{itemize}
\item[iii)] there is only one coupling of $N$ random curves with endpoints $V^1_0, \ldots, V^{2N}_0$ and $\hat{\Gamma}$ that satisfies (i)--(ii);
\item[iv)] in this unique coupling the curves $\gamma_{1}, \ldots, \gamma_{N}$ are determined by $\hat{\Gamma}$; and
\item[v)] the law of the curves $ \gamma_1, \ldots, \gamma_{N}$ is the following: the marginal law of a single curve 
(or initial segment) from a given boundary point $V^0_j$
is the local multiple SLE of~\eqref{eq:NSLE SDE} with $\kappa = 4$ and the partition function~\eqref{eq:GFF ptt fcn} (such a curve always terminates at a boundary point of opposite index parity),
and given any collection of 
full curves, the conditional law of any remaining curve is the analogous local multiple SLE in the reduced domain, between the remaining boundary points.
\end{itemize}
\end{proposition}

For the context of the present note, first of all, that part~(i) above guarantees that the level lines indeed form some planar pairing between the points $V^1_0, \ldots, V^{2N}_0$. Secondly, we have the following (also in~\cite[Theorem~1.4]{PW}):

\begin{corollary}[GFF level lines are global multiple SLEs]
\label{cor:GFF lvl lines}
For any pairing $\alpha$ appearing for the GFF level lines with a positive probability\footnote{\textit{A posteriori}, by Theorem~\ref{thm:GFF lvl lines}, all planar pairings indeed appear with a positive probability.}, the level lines conditional on the pairing $\alpha$ are an $\alpha$-global multiple $\mathsf{SLE}(4)$, as defined in~\cite{BPW}.
\end{corollary}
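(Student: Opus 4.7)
The plan is to verify directly that the conditional measure $\bbP_\alpha := \bbP[\,\cdot\,|\,\mathrm{pairing}\ \alpha\,]$ satisfies the defining property of a global multiple $\mathsf{SLE}(4)$ with pairing $\alpha$ (in the sense of~\cite{BPW}): namely, that for each $i$, the conditional law of $\gamma_i$ given the other $N-1$ curves $(\gamma_j)_{j\neq i}$ is a chordal $\mathsf{SLE}(4)$ between its two endpoints in the relevant connected component of $\bbH\setminus\bigcup_{j\neq i}\gamma_j$. The main input will be Proposition~\ref{prop:GFF lvl lines}(iv).

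First I would record the $N=1$ specialization of the multiple SLE setup: a local multiple $\mathsf{SLE}(4)$ with partition function~\eqref{eq:GFF ptt fcn}, which reduces to $\calZ(x_1,x_2)=(x_2-x_1)^{-1/2}$, agrees with the initial segment of a chordal $\mathsf{SLE}(4)$ between the two marked points. This follows from a direct computation showing that $\kappa\,\partial_1\calZ/\calZ = 2/(V^2_t-W_t)$, recovering the standard drift of a chordal $\mathsf{SLE}(4)$ driving function targeted at $V^2_t$; the strong Markov property of chordal SLE then lifts this identification from any initial segment to the entire curve.

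Next I would apply Proposition~\ref{prop:GFF lvl lines}(iv) with $k=0$ and $\ell = N-1$: under $\bbP$, the regular conditional law of the remaining curve $\gamma_i$ given $(\gamma_j)_{j\neq i}$ is the local multiple $\mathsf{SLE}(4)$ in the reduced domain between the two remaining boundary points, which by the previous step is exactly chordal $\mathsf{SLE}(4)$ between those endpoints in the relevant component. In particular, the endpoints of $\gamma_i$---and thus the pairing itself---are deterministic functions of $(\gamma_j)_{j\neq i}$. Hence, on the event that $(\gamma_j)_{j\neq i}$ is consistent with $\alpha$ (which has full conditional $\bbP_\alpha$-probability), the event $\{\mathrm{pairing}\ \alpha\}$ is automatic, and conditioning on it is inert:
\begin{align*}
\bbP_\alpha\big[\gamma_i\in\cdot\;\big|\;(\gamma_j)_{j\neq i}\big] \;=\; \bbP\big[\gamma_i\in\cdot\;\big|\;(\gamma_j)_{j\neq i}\big].
\end{align*}
Since the right-hand side is a chordal $\mathsf{SLE}(4)$ between the two endpoints of $\gamma_i$ in the relevant component, this exhibits $\bbP_\alpha$ as a global multiple $\mathsf{SLE}(4)$ with pairing $\alpha$.

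The step I expect to require the most care is the local-to-global identification used in the $N=1$ step: Proposition~\ref{prop:GFF lvl lines}(iv) is phrased in terms of initial segments inside localization neighbourhoods, whereas the definition of global multiple SLE demands a statement about the complete curve. This gap is, however, closed by part~(i) of the Proposition (the curves are complete simple chordal curves between the marked points), combined with the Markov property of chordal $\mathsf{SLE}(4)$ which pins down the full law of the curve from its driving function in a succession of localization neighbourhoods along it.
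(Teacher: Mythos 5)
Your proposal is correct and takes essentially the same route as the paper's proof: both verify the characterization of the global multiple $\mathsf{SLE}(4)$ from~\cite{BPW} by applying Proposition~\ref{prop:GFF lvl lines}(iv) with $N-1$ full curves given, observing that those curves already determine the pairing (so conditioning on $\alpha$ is inert), and identifying the two-marked-point local multiple $\mathsf{SLE}(4)$ with chordal $\mathsf{SLE}(4)$. The additional detail you give on the drift computation and on the local-to-global passage merely spells out what the paper dismisses as well known.
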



The rest of this appendix constitutes the proofs of Proposition~\ref{prop:GFF lvl lines} and Corollary~\ref{cor:GFF lvl lines}.

\nopagebreak{

\begin{proof}[of Proposition~\ref{prop:GFF lvl lines}]
Roughly, the idea of the proof is to first consider curves sampled from the explicit distribution~(v) and prove (the natural analogues of)~properties (i), (ii) and (iv) for more and more general collections of such curves.
First, (i),~(ii) and~(iv) for a single initial segment as in~(v) can be found in~\cite{IG1}.\footnote{
A direct computation shows that the multiple $\SLE(4)$ of~\eqref{eq:NSLE SDE} with the partition function~\eqref{eq:GFF ptt fcn} indeed coincides with the SLE$(\kappa = 4, \rho)$ curves of~\cite{IG1}.
} Next, consider these statements for a single full curve: for part~(i), it is known that the curve obtained from increasing the initial segments given by~(v) indeed terminates at a boundary point of opposite index parity: for an argument suitable in our context, the partition functions $\calZ_\alpha$ describe global multiple SLE curves with this property~\cite{PW}, and by~\eqref{eq:GFF cvx space} and Lemma~\ref{lem:ptt fcn conv space ppty}, the initial segments from~(v) are just a convex combination of such measures. To show (ii), i.e., that such a full curve indeed is a local set with the asserted boundary conditions, one should follow~\cite[Proof of Proposition~5.8]{Werner-GFF} that addresses the case of two marked points.\footnote{%
To re-do the proof, one should replace the ``argument martingale'' of the ordinary SLE$(4)$ by $H(g_t(z); V^1_t, \ldots, V^{j-1}_t, W_t, V^{j+1}_t, \ldots, V^{2N}_t )$, where
$
H(z; x_1, \ldots, x_{2N}) := \Im \big( \log (z - x_1) -  \log (z - x_2) + \ldots -  \log (z - x_{2N}) \big).
$%
} 
For part (iv), the full curve is determined by $\hat{\Gamma}$ since all its initial segments were, and by part~(i), the full curve is the union of its increasingly long initial segments.

Next, construct random curves $\gamma_{1}, \ldots, \gamma_{N}$ (for which the statement above will soon turn out to be true) as follows: first, $\gamma_1$ is the above-constructed level line from $V^1_0$, terminating at an even-index boundary point. Then, in the domain reduced by $\gamma_1$, we construct $\gamma_2$ as a similar level-line from $V^3_0$, etc. 
Such random curves satisfy property (i) by construction. By the above cases and the conformal property of the GFF, the collection of full curves $\gamma_{1} \cup \ldots \cup \gamma_{N}$ is also a local set with the asserted boundary conditions (claim (ii)), and satisfies (iv). It is now that claim (iii) enters the proof; it (or more precisely, a suitable variant only referring to the local set property of full curves) is proven in~\cite[Proposition~5.15]{Werner-GFF}. Now, had we, for instance, first grown a curve from $V^3_0$ and only then the one from $V^1_0$, then we would get an \textit{a priori} different collection of random curves with the conclusions of this paragraph still true. However, due to property~(iii), the law of such two collections of random curves is actually the same. Formally, we just concluded the random curves $\gamma_{1}, \ldots, \gamma_{N}$ above indeed are described by~(v), and satisfy~(i)--(iv).

It remains to consider collections of both full curves and initial segments. Then, only~(ii) does not follow from the case of collections of full curves. For that matter, consider first only one initial segment $\eta_{\imath_1}$ and the full
 curves $ \gamma_{\jmath_1}, \ldots, \gamma_{\jmath_\ell} $. By~(v), we can choose to ``grow $\eta_{\imath_1}$ last'' and~(ii) then follows from the case of a single initial segment, applied in the reduced domain. Finally, multiple initial segments are handled by the union rule of local sets~\cite[Propositions~4.13 and~4.23]{Werner-GFF} applied to the union of $\eta_{\imath_i} \cup \gamma_{\jmath_1} \cup \ldots \cup \gamma_{\jmath_\ell} $ over $i$.
\end{proof}

\begin{proof}[of Corollary~\ref{cor:GFF lvl lines}]
Recall that the global multiple $\mathsf{SLE}(\kappa)$, $\kappa \leq 4$, in  $(\bbH; V_0^1,  \ldots, \newline V_0^{2N})$ with pairing $\alpha$ is a collection of random disjoint curves forming the pairing $\alpha$ between the boundary points, for which, given any $(N-1)$ curves, the conditional law of the remaining one is a chordal $\mathsf{SLE}(\kappa)$ between the remaining two boundary points in the domain reduced by the first $(N-1)$ curves~\cite{BPW}. 
Consider thus the $\alpha$-paired GFF level lines, and the conditional law of the $j$:th curve $\gamma_j$ given $\gamma_i$, $i \neq j$. Sample first $\gamma_i$, $i \neq j$, via Proposition~\ref{prop:GFF lvl lines}(v). Note that already these $(N-1)$ curves reveal the pairing formed by the level lines. Given that the pairing $\alpha$ is observed, by Proposition~\ref{prop:GFF lvl lines}(v) again, the conditional law of $\gamma_j$ given  $\gamma_i$, $i \neq j$ is the (unique) local multiple $\mathsf{SLE}(4)$ between remaining marked boundary points in the remaining sub-domain, which is well-known to coincide with the chordal $\mathsf{SLE}(4)$.
\end{proof}
}

\end{appendix}

\end{document}